\documentclass[letter,11pt]{amsart}
\usepackage{amssymb} 
\usepackage{graphicx}
\usepackage{color}
\usepackage[font=footnotesize]{caption}
\usepackage{mathtools}


\usepackage{hyperref}
\usepackage{float}
\usepackage{calc}
\def\thetitle{{ Is a typical bi-Perron algebraic unit a pseudo-Anosov dilatation?}}
\hypersetup{
   pdftitle=   \thetitle,
   pdfauthor=  {Hyungryul Baik, Ahmad Rafiqi, and Chenxi Wu}
}

\newtheorem{thm}{Theorem}
\newtheorem*{thm*}{Theorem}
\newtheorem{lem}[thm]{Lemma}
\newtheorem{cor}[thm]{Corollary}

\theoremstyle{remark}

\usepackage{tikz}
\usetikzlibrary{calc,decorations.markings}
\usetikzlibrary{shapes,snakes}

\theoremstyle{definition}
\newtheorem*{defn*}{Definition}

\newcommand\Z{\mathbb{Z}}

\title\thetitle

\author{Hyungryul Baik}
\address{Mathematisches Institut,
Rheinische Friedrich-Wilhelms-Universit\"{a}t Bonn,
Endenicher Allee 60,
53115 Bonn, Germany}
\email{baik@math.uni-bonn.de}

\author{Ahmad Rafiqi}
\address{Department of Mathematics, Cornell University, Malott Hall, Ithaca, NY 14853, USA}
\email{ar776@cornell.edu}

\author{Chenxi Wu}
\address{Max Planck Institute for Mathematics, Bonn, Germany}
\email{chenxiwu@mpim-bonn.mpg.de}

\begin{document}
\maketitle

\begin{abstract}
In this note, we deduce a partial answer to the question in the title. 
In particular, we show that asymptotically almost all bi-Perron algebraic unit whose characteristic polynomial has degree at most $2n$ 
do not correspond to dilatations of pseudo-Anosov maps on a closed orientable surface of genus $n$ for $n\geq 10$. As an application of the argument, 
we also obtain a statement on the number of closed geodesics of the same length in the moduli space of area one abelian differentials for low genus cases. 
\end{abstract}


Thurston's celebrated theorem \cite{Thurston88} classified self-homeomorphisms of compact orientable surfaces up to isotopy. 
As long as a homeomorphism does not admit a finite power which is isotopic to the identity and
does not admit a finite union of disjoint simple closed curves which are preserved up to isotopy, it is 
isotopic to a so-called \emph{pseudo-Anosov map}. Associated to a pseudo-Anosov map is a positive number 
called its \emph{dilatation}. The logarithm of the dilatation of a pseudo-Anosov map is the topological entropy 
of the map. See \cite{FLP} for details. 

We call a number $\lambda$ to be a \emph{pseudo-Anosov dilatation} if it is the dilatation constant for a pseudo-Ansov map on some compact orientable surface. 
We call an algebraic integer $\lambda$ \emph{bi-Perron} if all the Galois conjugates of $\lambda$ are contained in an annulus of outer radius $\lambda$ itself and inner radius $1/\lambda$. Fried \cite{Fried} showed that every pseudo-Anosov dilatation is a bi-Perron algebraic unit, and he conjectured that this is also a sufficient condition: every bi-Perron algebraic unit is realized as the dilatation constant of some pseudo-Anosov map. 

This turned out to be a difficult question and there are many related works. Thurston \cite{Thurston14} gave an answer to an ${\rm Out}(F_n)$-version of this question. In  \cite{Thurston14}, Thurston also gave a few examples of constructions of pseudo-Anosov maps for given bi-Perron algebraic units 
(here the numbers are given as the slopes of some post-critically finite piecewise-linear self-homeomorphisms of the unit interval). These examples motivated the authors' previous work to construct pseudo-Anosov maps in \cite{BRW16} for a bi-Perron algebraic unit given as the leading eigenvalue of a Perron-Frobenius matrix satisfying some additional properties. Both constructions in \cite{Thurston14} and \cite{BRW16} are similar to the one given in \cite{CH04}.  
See also, for instance, \cite{AY80}, \cite{Long85}, \cite{Penner88}, \cite{Leininger04}, \cite{Hir10}, \cite{LT11a}, \cite{LT11b}, \cite{SS15} for various constructions of pseudo-Anosov maps. 

In this short note, we formulate an easier version of Fried's question, and give a statistical answer to it. To state our result precisely, we introduce some notation. 

Let $n \ge 2$ be fixed, and $R$ be any positive real number.  
Define $\mathcal{B}_n(R)$ to be the set of bi-Perron algebraic units no larger than $R$ whose characteristic polynomial has degree at most $2n$. Here, by the characteristic polynomial of a bi-Perron algebraic unit $\lambda$, we mean the monic palindromic integral polynomial whose leading root is $\lambda$ and has the lowest degree among all such polynomials. And let $\mathcal{D}_n(R)$ be the set of dilatations no larger than $R$ of pseudo-Anosov maps with orientable invariant foliations on a closed orientable surface $S_n$ of genus $n$. Similarly,  let $\mathcal{D'}_n(R)$ be the set of dilatations no larger than $R$ of pseudo-Anosov maps, not necessarily with orientable invariant foliation, on a closed orientable surface with genus $n$.

We remark that $\mathcal{D}_n(R)$ is contained in $\mathcal{B}_n(R)$, and $\mathcal{D'}_n(R)$ is contained in $\mathcal{B}_{3n-3}(R)$.  
A pseudo-Anosov dilatation $\lambda$ on a surface of genus $n$ is a root of an integral palindromic 
polynomial of degree at most $2n$ if its invariant foliations are orientable. This is because $\lambda$ is the leading eigenvalue of the induced symplectic action on the homology group of the surface, which is $\mathbb{Z}^{2n}$. If we do not require the invariant foliation to be orientable, the upper bound on degree is $6n-6$: we can reduce this case to the case of orientable foliation by taking a double cover of the surface, and this bound follows from the fact that a quadratic differential on a surface of genus $n$ has at most $2n$ zeros which is due to Gauss-Bonnet together with the Riemann-Hurwitz formula.  

Note that Fried's conjecture is equivalent to $\mathcal{B}_n(R)$ being contained in $\mathcal{D}_m(R)$ 
for some large enough $m$. But a priori, $m$ could be arbitrarily large and we do not know how to prove or disprove the claim. 
Instead we show the following. 

\begin{thm}
\label{thm:main}
Let $\mathcal{B}_n(R)$, $\mathcal{D'}_n(R)$ and $\mathcal{D}_n(R)$ be as above. Then 
\begin{enumerate}
\item $$\lim_{R\rightarrow\infty}{|\mathcal{D}_m(R) \cap \mathcal{B}_n(R)|\over|\mathcal{B}_n(R)|}=0\ ,$$
where $4m-3 \leq n(n+1)/2$. 
In particular, $\lim_{R\rightarrow\infty}{|\mathcal{D}_n(R)|\over|\mathcal{B}_n(R)|}=0, \forall n \geq 6$.
\item $$\lim_{R\rightarrow\infty}{|\mathcal{D'}_m(R) \cap \mathcal{B}_n(R)|\over|\mathcal{B}_n(R)|}=0\ ,$$
where $6m-6 \leq n(n+1)/2$. 
In particular, $\lim_{R\rightarrow\infty}{|\mathcal{D'}_n(R) \cap \mathcal{B}_n(R)|\over|\mathcal{B}_n(R)|}=0, \forall n \geq 10$.
\end{enumerate}
\end{thm}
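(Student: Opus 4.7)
The proof has two main ingredients: a lower bound for the denominator $|\mathcal{B}_n(R)|$, and an upper bound for the numerators $|\mathcal{D}_m(R)|$ and $|\mathcal{D}'_m(R)|$.

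First I would establish that $|\mathcal{B}_n(R)| \gtrsim R^{n(n+1)/2}$. A monic palindromic integer polynomial of degree $2n$ is determined by its coefficients $a_1, \ldots, a_n$ of $x^{2n-1}, \ldots, x^n$ (the constant term being $\pm 1$). The bi-Perron condition forces $|a_k| \leq \binom{2n}{k} R^k$, so candidate tuples fill a box of volume $\asymp R^{n(n+1)/2}$. To see that a positive proportion of integer points in this box correspond to bona fide bi-Perron polynomials, I would parametrize via the factorization
\[
p(x) = \prod_{i=1}^n (x^2 - \mu_i x + 1),
\]
where each factor $x^2 - \mu_i x + 1$ has its two roots forming a reciprocal pair inside the annulus iff $|\mu_i| \leq R + 1/R$ (real case) or $\mu_i$ comes in complex conjugate pairs with appropriate bound. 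The Jacobian of the resulting Vieta-type map $\mu \mapsto (a_1, \ldots, a_n)$ is a polynomial of total degree $n(n-1)/2$ in the $\mu_i$, so the image of the real box $[-R-1/R,R+1/R]^n$ has $n$-dimensional volume $\asymp R^{n(n-1)/2} \cdot R^n = R^{n(n+1)/2}$, matching the ambient coefficient box up to a constant. A Davenport-type lattice-point count for regions with Lipschitz boundary then delivers the stated lower bound.

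Next I would bound $|\mathcal{D}_m(R)|$ and $|\mathcal{D}'_m(R)|$ from above using the Eskin--Mirzakhani counting theorem for closed Teichm\"uller geodesics. Any pseudo-Anosov class with dilatation $\lambda$ corresponds to a closed Teichm\"uller geodesic of length $\log \lambda$ in $\mathcal{M}_m$, lying in the image of a stratum of the associated holomorphic differential. For pA maps with orientable invariant foliation this ambient space is the moduli of abelian differentials on $S_m$, whose principal stratum has complex dimension $4m - 3$; for general pA maps one uses quadratic differentials, of complex dimension $6m - 6$. The Eskin--Mirzakhani theorem asserts that in a stratum of complex dimension $d$, the number of primitive closed Teichm\"uller geodesics of length at most $T$ is asymptotic to $e^{dT}/(dT)$; taking $T = \log R$ and absorbing the (strictly lower-order) contribution of non-primitive classes yields
\[
|\mathcal{D}_m(R)| \lesssim \frac{R^{4m-3}}{\log R}, \qquad |\mathcal{D}'_m(R)| \lesssim \frac{R^{6m-6}}{\log R}.
\]
Combining with the lower bound produces
\[
\frac{|\mathcal{D}_m(R) \cap \mathcal{B}_n(R)|}{|\mathcal{B}_n(R)|} \lesssim \frac{R^{4m - 3 - n(n+1)/2}}{\log R},
\]
which tends to zero whenever $4m - 3 \leq n(n+1)/2$ (the logarithmic factor absorbs the equality case); the same argument with $4m - 3$ replaced by $6m - 6$ gives the $\mathcal{D}'$ version. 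Specializing to $m = n$ and solving the quadratic inequalities $n^2 - 7n + 6 \geq 0$ and $n^2 - 11n + 12 \geq 0$ recovers the explicit thresholds $n \geq 6$ and $n \geq 10$ respectively.

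The main obstacle I anticipate is the lower bound on $|\mathcal{B}_n(R)|$. The volume/Jacobian heuristic above is a clean outline, but executing it rigorously requires handling the union of the real and complex-conjugate-pair regions in $\mu$-space uniformly, controlling the behaviour near the discriminant locus where the Vieta map degenerates, and verifying that the resulting (non-convex) region in coefficient space is regular enough for a Davenport-type lattice-point count to give a matching lower bound. The upper-bound half of the argument is essentially a black-box invocation of Eskin--Mirzakhani together with the standard dictionary between pseudo-Anosov dilatations, closed Teichm\"uller geodesics, and strata of holomorphic differentials.
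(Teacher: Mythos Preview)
Your overall strategy matches the paper exactly: bound $|\mathcal{D}_m(R)|$ and $|\mathcal{D}'_m(R)|$ above via the Eskin--Mirzakhani(-Rafi)/Hamenst\"adt counting theorems, bound $|\mathcal{B}_n(R)|$ below by $R^{n(n+1)/2}$, and divide. The upper-bound half and the final combination are identical to the paper's argument.

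The one genuine difference is how the lower bound $|\mathcal{B}_n(R)|\gtrsim R^{n(n+1)/2}$ is obtained. You propose to stay inside the space of degree-$2n$ palindromic polynomials, parametrize by $p(x)=\prod_i(x^2-\mu_i x+1)$, and run a Jacobian/Davenport lattice-point count; you correctly flag this as the main technical obstacle. The paper sidesteps this entirely by observing that your $\mu_i$ are precisely the roots of the degree-$n$ polynomial $q(y)$ with $p(x)=x^n q(x+1/x)$, so it suffices to count \emph{Perron numbers of degree $n$} bounded by $R$. For that count the paper uses a Rouch\'e argument: three explicit linear inequalities in the rescaled coefficients $(a_0/R^n,\dots,a_{n-1}/R)$ cut out a nonempty open set in $[-1,1]^n$ on which the polynomial has one real root in $(R/2,R)$ and all others below $R/3$, and the lattice-point count in that fixed region is immediate. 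Thus the paper's route turns your anticipated obstacle into a two-line volume estimate; your Jacobian approach would also work, but is strictly harder to make rigorous for exactly the reasons you list.
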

Here $|A|$ means the cardinality of $A$ for a finite set $A$.

Theorem \ref{thm:main} says that asymptotically almost all bi-Perron algebraic units whose characteristic polynomial has degree at most $2n$ do not correspond to 
dilatations of pseudo-Anosov maps on a surface of genus $n$ for all $n \geq 10$, and for $n\geq 6$ if the invariant foliation is further assumed to be orientable. 
It would be interesting to see if the statement still holds for lower genera. 

Let $\Gamma_n$ be the set of all periodic orbits for the Teichm\"uller flow on the moduli space of area one abelian differentials on $S_n$.
Then there exists a surjective map $\Gamma_n \to \cup_{R > 0} \mathcal{D}_n(R)$ defined by $\gamma \mapsto e^{\ell(\gamma)}$
where $\ell(\gamma)$ is the length of the orbit $\gamma \in \Gamma_n$. 
Let $\Gamma_n(R)$ be the preimage of $\mathcal{D}_n(R)$ of this map. 

By $f\sim g$ we mean $\exists C$ such that ${1\over C}f(x)\leq g(x)\leq Cf(x)$ when $x\gg 0$. By $f\lesssim g$ we mean $f=O(g)$ when $x\rightarrow\infty$. At our best knowledge, the following theorem is independently due to Eskin-Mirzakhani-Rafi \cite{EMR12} and Hamenst\"adt \cite{Hamen13b} (which is rephrased for our purpose).  
\begin{thm}[\cite{EMR12}, \cite{Hamen13b}]
\label{thm:EMRH}
$|\Gamma_n(R)| \sim \frac{R^{4n-3}}{(4n-3) \log R}$.
\end{thm}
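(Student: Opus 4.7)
The plan is to reduce the claim to the stratum-by-stratum counting of periodic orbits of the Teichm\"uller flow established by Eskin--Mirzakhani--Rafi and Hamenst\"adt. By definition, a periodic orbit $\gamma$ belongs to $\Gamma_n(R)$ exactly when $e^{\ell(\gamma)} \leq R$, i.e.\ when its length satisfies $\ell(\gamma) \leq T$ for $T := \log R$. Thus counting periodic orbits with dilatation at most $R$ is the same as counting periodic orbits of the Teichm\"uller flow on the full moduli space of area one abelian differentials of length at most $T$, and the task reduces to producing the sharp asymptotic in $T$.

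First, I would decompose the moduli space into its strata $\mathcal{H}(\alpha)$ indexed by unordered partitions $\alpha = (\alpha_1, \ldots, \alpha_k)$ of $2n-2$, where the $\alpha_i$ are the orders of the zeros of the differential, and then further into connected components in the sense of Kontsevich--Zorich. Each such piece is flow-invariant, so every periodic orbit lies in a unique piece and $|\Gamma_n(R)|$ splits as a finite sum $\sum_\alpha N(T, \mathcal{H}(\alpha))$ over these pieces. Applying the main counting theorem of \cite{EMR12} and \cite{Hamen13b} componentwise gives
\[ N(T, \mathcal{H}(\alpha)) \sim \frac{e^{h(\alpha) T}}{h(\alpha)\, T}, \]
where $h(\alpha)$ is the topological entropy of the Teichm\"uller flow on that component, known to equal the complex dimension $2n + k - 1$ of the stratum.

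The quantity $h(\alpha) = 2n + k - 1$ is strictly maximized on the principal stratum, where all zeros are simple so $k = 2n-2$ and $h = 4n-3$. Every other stratum has $k \leq 2n-3$ and therefore contributes only $O(R^{4n-4}/\log R)$, which is absorbed into the error. Substituting $T = \log R$ into the principal stratum's contribution yields the stated asymptotic $|\Gamma_n(R)| \sim R^{4n-3}/((4n-3)\log R)$. The main obstacle in this strategy is really the counting theorem \cite{EMR12}, \cite{Hamen13b} itself, which I would cite as a black box: the upper-bound half of such a prime-geodesic-type statement is comparatively routine via orbit-counting and volume bounds, but the matching lower bound requires deep mixing and equidistribution estimates for the Teichm\"uller flow on individual strata, and is the substantive technical content.
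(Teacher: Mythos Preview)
Your proposal is correct and follows essentially the same route as the paper's explanation: decompose the moduli space into (connected components of) strata, invoke the Eskin--Mirzakhani--Rafi/Hamenst\"adt count $N(T,\mathcal{H}(\alpha))\sim e^{(2n+k-1)T}/((2n+k-1)T)$ componentwise, and observe that the principal stratum with $k=2n-2$ simple zeros gives the dominant exponent $4n-3$. If anything you are slightly more careful than the paper, which glosses over the fact that non-principal strata have strictly smaller exponent; under the paper's coarse convention $f\sim g \Leftrightarrow f,g$ agree up to multiplicative constants, this distinction is immaterial since finitely many lower-order terms are absorbed.
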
 

We give a brief explanation of the above statement. 
In \cite{EMR12} and \cite{Hamen13b}, 
it was stated that when restricted to each connected component of the strata of 
the moduli space of area one abelian differentials on $S_n$, 

\begin{align}  
|\Gamma_n(R)| &= |\{\gamma \in \Gamma_n : e^{\ell(\gamma)} \leq R\}| \notag \\
&= |\{\gamma \in \Gamma_n : \ell(\gamma) \leq \log R\}| \notag \\
&\sim \frac{e^{(2n + \ell -1) \log R }}{(2n + \ell -1) \log R} = \frac{R^{2n+\ell - 1}}{(2n+\ell - 1) \log R}. \notag
\end{align}

Here $\ell$ is the maximum number of zeros of an area one abelian differential on $S_n$, so it is $2n-2$. 
See \cite{Masur82}, \cite{Veech86}, \cite{EM11} and \cite{EMR12} for the relevant background of this theorem. 

As a result, we have $|\Gamma_n(R)| \sim \frac{R^{4n-3}}{(4n-3) \log R}$ on each connected component of the strata. 
But for fixed $n$, there exists only finite number of such components. 
Therefore we get $|\Gamma_n(R)| \sim \frac{R^{4n-3}}{(4n-3) \log R}$ without restricting to the components. 
Note that since $n$ is fixed, we can just say $|\Gamma_n(R)| \sim \frac{R^{4n-3}}{\log R}$.

As a direct corollary, we have:
\begin{cor} 
\label{cor:EMRH}
$|\mathcal{D}_n(R)| \lesssim \frac{R^{4n-3}}{\log R}$.
\end{cor}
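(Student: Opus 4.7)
The plan is to exploit the surjective map $\gamma\mapsto e^{\ell(\gamma)}$ from $\Gamma_n$ onto $\bigcup_{R>0}\mathcal{D}_n(R)$ described in the paragraph preceding Theorem \ref{thm:EMRH}. By the very definition of $\Gamma_n(R)$ as the preimage of $\mathcal{D}_n(R)$, the restriction of this map to $\Gamma_n(R)$ is still a surjection onto $\mathcal{D}_n(R)$: any $\lambda\in\mathcal{D}_n(R)$ has a preimage $\gamma\in\Gamma_n$, and that preimage automatically lies in $\Gamma_n(R)$. First I would record this observation explicitly, since it is the only structural input beyond the quoted count.

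From the surjection one immediately obtains $|\mathcal{D}_n(R)|\le|\Gamma_n(R)|$. Combining with Theorem \ref{thm:EMRH} gives
\[
|\mathcal{D}_n(R)|\;\le\;|\Gamma_n(R)|\;\sim\;\frac{R^{4n-3}}{(4n-3)\log R}\;\lesssim\;\frac{R^{4n-3}}{\log R},
\]
where the fixed factor $1/(4n-3)$ is absorbed into the implicit constant of $\lesssim$ (legitimate since $n$ is held fixed throughout). This is the claimed asymptotic bound.

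There is no genuine obstacle: the entire quantitative content of the corollary is packaged in Theorem \ref{thm:EMRH}, and the corollary just discards the many-to-one nature of $\gamma\mapsto e^{\ell(\gamma)}$. I would only take care to remark that the inequality $|\mathcal{D}_n(R)|\le|\Gamma_n(R)|$ is in general far from sharp, since the same dilatation can be realized by many distinct pseudo-Anosov conjugacy classes and hence by many distinct periodic Teichmüller orbits; but this looseness is harmless for the upper bound we need in order to compare $|\mathcal{D}_n(R)|$ against growth estimates for $|\mathcal{B}_n(R)|$ in the proof of Theorem \ref{thm:main}.
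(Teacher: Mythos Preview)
Your proposal is correct and matches the paper's approach: the paper states the result ``as a direct corollary'' of Theorem \ref{thm:EMRH} without further argument, and your surjection inequality $|\mathcal{D}_n(R)|\le|\Gamma_n(R)|$ together with Theorem \ref{thm:EMRH} is precisely the intended one-line deduction. Your closing remark about the looseness of the bound also anticipates the paper's own comment following Corollary \ref{cor:em}.
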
 

In the exactly same way, we can obtain an analogue of Corollary \ref{cor:EMRH} for $|\mathcal{D'}_n(R)|$ from the 
following theorem which is due to Eskin and Mirzakhani \cite{EM11}:
\begin{thm}[Theorem 1.1, \cite{EM11}]
The number of geodesics in the moduli space of genus $n$ surface of length at most $\log(R)$ $\sim {R^{6n-6}\over (6n-6)\log(R)}$.
\end{thm}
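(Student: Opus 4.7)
The plan is to adapt the Margulis counting scheme for closed orbits of a mixing hyperbolic flow to the Teichm\"uller geodesic flow on the moduli space $\mathcal{Q}_n$ of unit-area quadratic differentials on $S_n$. The key correspondence is that closed geodesics in the moduli space $\mathcal{M}_n$ of length at most $\log R$ are in bijection with closed orbits of the Teichm\"uller flow on $\mathcal{Q}_n$ of length at most $\log R$, which in turn correspond to conjugacy classes of pseudo-Anosov mapping classes of dilatation at most $R$. The exponent $6n-6$ is the real dimension of $\mathcal{Q}_n$ and also the topological entropy of the Teichm\"uller flow with respect to the Masur--Veech measure; this entropy will be the source of the $R^{6n-6}$ factor, in exact analogy with how the entropy of the geodesic flow on a compact negatively curved manifold determines the exponential growth rate of its closed geodesics via the prime geodesic theorem.

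Concretely I would proceed in four stages. First, establish (or quote from Avila--Gou\"ezel--Yoccoz) that the Teichm\"uller flow restricted to each connected component of each stratum is mixing with respect to the Masur--Veech measure, with quantitative control on the rate of mixing. Second, exploit the hyperbolicity of the Kontsevich--Zorich cocycle to build a local product structure for the flow and a Margulis-type system of conditional measures on unstable leaves, in terms of which the periodic orbit count can be expressed as an integral against a thickened family of unstable measures. Third, apply the Margulis thesis argument: mixing forces long flow-boxes to equidistribute, and so the number of periodic orbits of length at most $T$ is asymptotic to $e^{hT}/(hT)$ with $h$ equal to the topological entropy. Setting $T=\log R$ and $h=6n-6$ produces the asymptotic as stated. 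Fourth, sum over the finitely many connected components of all strata, noting that only the principal stratum has entropy $6n-6$ while lower-dimensional strata contribute strictly sub-exponential corrections that are absorbed into the error.

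The hardest step will be controlling the non-compactness of $\mathcal{M}_n$, since the classical Margulis machinery is written for compact phase spaces. One must show that closed orbits spending a positive fraction of their length in the thin part of moduli space, where the flow is no longer uniformly hyperbolic, have exponential growth rate strictly less than $6n-6$ and can therefore be discarded. The technical device here is a quantitative recurrence estimate of Eskin--Masur type: the Masur--Veech measure of the set of quadratic differentials whose Teichm\"uller trajectory over $[0,T]$ makes an excursion of depth proportional to $T$ into the cusp decays exponentially in $T$. Combined with a covering argument, this pushes the deep-cusp contribution into the error term and lets the Margulis count proceed on an exhausting family of compact sub-loci. Verifying that equidistribution and mixing hold uniformly over such an exhaustion, and matching constants across the different connected components of the strata, is where the bulk of the work in \cite{EM11} lies.
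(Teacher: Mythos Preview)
The paper does not prove this theorem at all: it is stated as Theorem~1.1 of \cite{EM11} and simply quoted as a black box, with no argument given in the paper. So there is no ``paper's own proof'' to compare your proposal against.

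That said, your outline is a reasonable high-level summary of the strategy in \cite{EM11}, and your identification of the main difficulty---handling the non-compactness of moduli space via quantitative recurrence estimates so that the Margulis counting scheme can be run on a compact core---is exactly right. A couple of points where your sketch deviates from what Eskin and Mirzakhani actually do: they do not build a full Margulis system of conditional measures on unstable leaves in the classical sense, but rather obtain the upper bound by a more direct covering/recurrence argument using the Hodge norm and the lower bound from a closing lemma combined with mixing; and the input from Avila--Gou\"ezel--Yoccoz is not strictly needed, since the weaker mixing results of Masur and Veech already suffice for the lower bound. Your remark about summing over strata is also slightly off target here: the theorem as stated concerns the full moduli space of quadratic differentials, not a stratum-by-stratum count (that refinement is the content of \cite{EMR12} and \cite{Hamen13b}, used for Theorem~\ref{thm:EMRH}). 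None of this invalidates your plan as a roadmap, but if you were to write out a proof you would want to follow the actual architecture of \cite{EM11} rather than the classical Margulis thesis verbatim.
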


And as a direct corollary, just like Corollary \ref{cor:EMRH}, we have:
\begin{cor} 
\label{cor:em}
$|\mathcal{D'}_n(R)| \lesssim \frac{R^{6n-6}}{\log R}$.
\end{cor}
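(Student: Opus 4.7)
The plan is to transport the derivation of Corollary~\ref{cor:EMRH} from the abelian-differential setting to the full moduli space $\mathcal{M}_n$ of Riemann surfaces of genus $n$, in which invariant foliations of pseudo-Anosov maps are no longer required to be orientable.

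First I would recall the standard dictionary between closed Teichm\"uller geodesics in $\mathcal{M}_n$ and pseudo-Anosov conjugacy classes in $\Mod(S_n)$: every conjugacy class of a pseudo-Anosov $f\in\Mod(S_n)$ corresponds to an oriented closed Teichm\"uller geodesic $\gamma_f$ in $\mathcal{M}_n$ whose length satisfies $\ell(\gamma_f)=\log\lambda(f)$, where $\lambda(f)$ is the dilatation. Consequently the map $\gamma\mapsto e^{\ell(\gamma)}$ sends the set $\Gamma'_n(R)$ of closed Teichm\"uller geodesics in $\mathcal{M}_n$ of length at most $\log R$ surjectively onto $\mathcal{D}'_n(R)$, since by definition every element of $\mathcal{D}'_n(R)$ is the dilatation of some pseudo-Anosov on $S_n$.

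Second I would apply Theorem~1.1 of \cite{EM11}, quoted just above, to obtain $|\Gamma'_n(R)|\sim R^{6n-6}/((6n-6)\log R)$. Combining this with the surjection of the previous paragraph gives
$$|\mathcal{D}'_n(R)|\leq|\Gamma'_n(R)|\lesssim \frac{R^{6n-6}}{\log R},$$
which is exactly the assertion of the corollary.

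There is essentially no obstacle in this argument; it is a verbatim rewording of the proof of Corollary~\ref{cor:EMRH} with the moduli space of area-one abelian differentials replaced by $\mathcal{M}_n$ and Theorem~\ref{thm:EMRH} replaced by Theorem~1.1 of \cite{EM11}. The only minor point worth mentioning is that the map $\gamma\mapsto e^{\ell(\gamma)}$ need not be injective (distinct pseudo-Anosov conjugacy classes, and hence distinct closed geodesics, can share a single dilatation value), but this only strengthens the inequality in the direction we need.
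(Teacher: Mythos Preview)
Your proposal is correct and follows exactly the approach the paper indicates: the paper simply says the corollary is obtained ``in the exactly same way'' as Corollary~\ref{cor:EMRH}, replacing Theorem~\ref{thm:EMRH} by Theorem~1.1 of \cite{EM11}, and your write-up spells out precisely that surjection-plus-counting argument. Your remark on non-injectivity of $\gamma\mapsto e^{\ell(\gamma)}$ matches the paper's own observation immediately following the corollary.
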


We remark that this does not imply $|\mathcal{D}_n(R)| \sim \frac{R^{4n-3}}{\log R}$ or $|\mathcal{D'}_n(R)| \sim \frac{R^{6n-6}}{\log R}$, since each element in $\mathcal{D}_n(R)$ or $\mathcal{D'}_n(R)$ may correspond to a lot of different closed geodesics in the moduli space. 

Now we study how $|\mathcal{B}_n(R)|$ grows. Let $P_n(R)$ be the set of Perron polynomials of degree $n$ with roots no larger than $R$, ($x>1$ is Perron if it is the root of a monic irreducible polynomial with integer coefficients, so that the other roots of the polynomial are (strictly) less than $x$ in absolute value). 

\begin{lem}
\label{lem:Pn}
	$|P_n(R)|\sim R^{n(n+1)/2}$.
\end{lem}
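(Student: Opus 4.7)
The plan is to establish matching bounds $|P_n(R)| \lesssim R^{n(n+1)/2}$ and $|P_n(R)| \gtrsim R^{n(n+1)/2}$.

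The upper bound is immediate from Vieta's formulas. If $f(x) = x^n + c_{n-1}x^{n-1} + \cdots + c_0 \in \Z[x]$ is a Perron polynomial with Perron root $\lambda \leq R$, then every root $\alpha_i$ satisfies $|\alpha_i| \leq \lambda \leq R$, so $|c_{n-k}| = |e_k(\alpha_1, \ldots, \alpha_n)| \leq \binom{n}{k} R^k$. Counting integer coefficient vectors gives
$$|P_n(R)| \leq \prod_{k=1}^n \bigl(2\binom{n}{k}R^k + 1\bigr) = O(R^{n(n+1)/2}).$$

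For the lower bound I would use a scaling/lattice-point argument. Identify a monic real polynomial $f(x) = x^n + c_{n-1}x^{n-1} + \cdots + c_0$ with its coefficient tuple $(c_0, \ldots, c_{n-1}) \in \R^n$, and let $\Pi^* \subset \R^n$ be the set of those whose largest-modulus root is a positive simple real $\lambda \in (1/2, 1)$ strictly greater than $|\alpha_i|$ for every other root $\alpha_i$. This is a nonempty open set---it contains $(x - 3/4)x^{n-1}$ and a small neighborhood of it, since roots depend continuously on coefficients at a simple root and strict dominance is an open condition---so $\operatorname{Vol}(\Pi^*) > 0$. The linear map
$$\Phi_R \co \R^n \to \R^n, \qquad (c_0, c_1, \ldots, c_{n-1}) \mapsto (R^n c_0, R^{n-1}c_1, \ldots, R c_{n-1}),$$
which corresponds to scaling every root by $R$, has Jacobian determinant $R^{n(n+1)/2}$ and carries $\Pi^*$ bijectively onto the set of monic real polynomials with strictly dominant real root in $(R/2, R)$. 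Picking any small box $B \subset \Pi^*$, the image $\Phi_R(B)$ is an axis-aligned box whose $i$-th side-length is scaled by $R^{n-i}$, hence contains $\gtrsim R^{n(n+1)/2}$ integer lattice points for $R$ large. Each such lattice point is a monic integer polynomial of degree $n$ with a strictly dominant positive real root in $(R/2, R] \subset (1, R]$.

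To land inside $P_n(R)$ I still need irreducibility, so I would subtract the reducible polynomials. Any reducible monic integer $f$ of degree $n$ with all roots in $|z| \leq R$ factors as $f = gh$ over $\Z$ with $g, h$ monic of degrees $k$ and $n-k$, $1 \leq k \leq n-1$, where each factor also has all roots in $|z| \leq R$. Applying the upper-bound estimate to each factor separately gives at most
$$\sum_{k=1}^{n-1} O\bigl(R^{k(k+1)/2 + (n-k)(n-k+1)/2}\bigr) = O\bigl(R^{n(n+1)/2 - (n-1)}\bigr)$$
reducible polynomials in the range considered, which is $o(R^{n(n+1)/2})$ for $n \geq 2$. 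Subtracting yields $|P_n(R)| \gtrsim R^{n(n+1)/2}$, and combined with the upper bound this gives $|P_n(R)| \sim R^{n(n+1)/2}$. The only point that really needs care is the openness and nonemptiness of $\Pi^*$; after that, the argument reduces to Gauss-type lattice counting in a product of intervals together with Vieta.
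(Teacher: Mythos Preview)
Your proof is correct and follows the same overall strategy as the paper: Vieta for the upper bound, and a nonempty open region in coefficient space together with lattice-point counting for the lower bound. The execution differs in two minor ways. First, the paper exhibits the open region explicitly via three Rouch\'e inequalities (forcing one root into the annulus $R/2<|z|<R$ and the remaining roots into $|z|<R/3$), whereas you appeal to continuity of roots near the explicit point $(x-3/4)x^{n-1}$; both produce a set of positive volume, and the paper's choice has the side benefit of giving a quantitative separation $(R/2$ versus $R/3)$ that is reused later in Lemma~\ref{lem:Bn}. Second, you read $P_n(R)$ as consisting of \emph{irreducible} Perron polynomials and therefore subtract the reducible ones inside this lemma; the paper allows reducible polynomials in $P_n(R)$ and removes them only in the subsequent Lemma~\ref{lem:reduciblePn}. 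Your subtraction estimate is exactly that lemma, so in effect you have folded the two lemmas into one---harmless for the final application, since only the irreducible count is ever used downstream.
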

\begin{proof}
$|P_n(R)|\lesssim R^{n(n+1)/2}$: Due to Vieta's formula the absolute value of the coefficient of $x^k$ of a monic, degree $n$ polynomial with all roots no larger than $R$ is $\lesssim R^{n-k}$. Hence, the total number of such polynomials must be $\lesssim \prod_k R^{n-k}=R^{n(n+1)/2}$.\\

$R^{n(n+1)/2}\lesssim |P_n(R)|$: Let $a_k$ be the coefficient of $x^k$ in a degree $n$ monic polynimial (so $a_n=1$). By Rouch\'e's theorem, when
\begin{equation}\label{1}
1>|{a_0\over R^n}|+|{a_1\over R^{n-1}}|+\dots + |{a_{n-1}\over R}|
\end{equation}
\begin{equation}\label{2}
{\left({1\over 2}\right)}^{n-1}|{a_{n-1}\over R}|>|{a_0\over R^n}|+\dots+|{a_{n-2}\over R^2}|{\left({1\over 2}\right)}^{n-2} + {\left({1\over 2}\right)}^n
\end{equation}
and
\begin{equation}\label{3}
{\left({1\over 3}\right)}^{n-1}|{a_{n-1}\over R}|>|{a_0\over R^n}|+\dots+|{a_{n-2}\over R^2}|{\left({1\over 3}\right)}^{n-2} + {\left({1\over 3}\right)}^n
\end{equation}
one root $\lambda$ of this polynomial has magnitude between $R/2$ and $R$ while all other roots have magnitude smaller than $R/3$. Hence $\lambda$ must be real. If $\lambda<0$ one can multiply $(-1)^{n-k}$ to $a_k$ to get a polynomial with a root $-\lambda$. Hence,  
 half of those polynomials have a leading positive real root, so they are in $P_n(R)$. 
 
The inequalities  (\ref{1}), (\ref{2}) and  (\ref{3}) are satisfied if and only if the point \newline $(a_0/R^n, \ldots, a_{n-1}/R)$ lies in a non-empty open subset $U\subset[-1,1]^n$. The number of such points as $R\rightarrow\infty$ converges to the volume of this open subset divided by 
the co-volume of the lattice $\Z/R^n \times \Z/R^{n-1} \times \ldots \times \Z/R$, and the co-volume of this lattice is $R^{-n(n+1)/2}$. \\
 \end{proof}

\begin{lem}
\label{lem:reduciblePn}
   $\lim_{R\rightarrow\infty} {|\{\text{reducible elements in }P_n(R)\}|\over|P_n(R)|}=0$
\end{lem}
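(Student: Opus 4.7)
The plan is to bound the number of reducible $f \in P_n(R)$ by counting possible factorizations and then divide by the lower bound of Lemma~\ref{lem:Pn}. Given a reducible such $f$, by Gauss's lemma we may factor $f = g \cdot h$ with $g,h \in \Z[x]$ monic of degrees $k$ and $n-k$ for some $1 \leq k \leq n-1$. Every root of $g$ or of $h$ is also a root of $f$, hence has absolute value at most $R$.

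The first half of the proof of Lemma~\ref{lem:Pn} is really a Vieta estimate on all monic integer polynomials of a fixed degree with bounded roots: the number of monic integer polynomials of degree $d$ with every root of modulus at most $R$ is $\lesssim R^{d(d+1)/2}$. Applying this bound to $g$ and to $h$ separately, the number of reducible $f \in P_n(R)$ admitting a factorization of type $(k,n-k)$ is at most
$$ R^{k(k+1)/2} \cdot R^{(n-k)(n-k+1)/2} \;=\; R^{n(n+1)/2 \,-\, k(n-k)}. $$
Summing over $1 \leq k \leq n-1$ (which overcounts each reducible polynomial by at most a factor depending only on $n$) and using $\min_{1 \leq k \leq n-1} k(n-k) = n-1$, attained at $k=1$ and $k=n-1$, the total reducible count is $\lesssim R^{n(n+1)/2 - (n-1)}$.

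Combining with $|P_n(R)| \sim R^{n(n+1)/2}$ from Lemma~\ref{lem:Pn}, the ratio in question is $\lesssim R^{-(n-1)}$, which tends to $0$ as $R \to \infty$ (for $n \geq 2$). I do not anticipate any genuine obstacle. The only step deserving a line of verification is the inequality $k(n-k) \geq 1$ for every $1 \leq k \leq n-1$, which is immediate, and which ensures that the exponent controlling the reducible count is strictly smaller than $n(n+1)/2$ for every factorization type; the whole argument then reduces to the Vieta bound already established in Lemma~\ref{lem:Pn}.
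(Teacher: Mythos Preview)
Your proof is correct and follows essentially the same approach as the paper: factor a reducible element as a product of two monic integer polynomials of lower degree, bound each factor count by the Vieta estimate from Lemma~\ref{lem:Pn}, and sum over factorization types. You go slightly further than the paper by computing the exponent explicitly as $n(n+1)/2 - k(n-k)$ and extracting the sharper bound $\lesssim R^{n(n+1)/2-(n-1)}$, whereas the paper is content to note that each term is $o(R^{n(n+1)/2})$.
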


\begin{proof}
	Because any reducible monic integer polynomial can be written as the product of two monic integer polynomials of lower degree, we have:
	$$|\{\text{reducible elements in }P_n(R)\}|\leq \sum_k |b_k(R)||b_{n-k}(R)|$$
	where $b_k(R)$ is the set of monic polynomials with roots bounded by $R$. The first part of the proof of the previous lemma implies that $|b_k(R)|\lesssim R^{k(k+1)/2}$, hence $|\{\text{reducible elements in }P_n(R)\}|\sim o(R^{n(n+1)/2})$.
\end{proof}

Let $\mathcal{P}_n(R)$ be the set of Perron numbers of degree $n$ no larger than $R$. They have one-one correspondence with irreducible elements in $P_n(R)$ which by Lemma \ref{lem:reduciblePn} constitute almost all of $P_n(R)$ asymptotically. Hence by Lemma \ref{lem:Pn}, $|\mathcal{P}_n(R)|\sim R^{n(n+1)/2}$.

\begin{lem}
\label{lem:Bn}
	 $|\mathcal{B}_n(R)|\sim R^{n(n+1)/2}$.
\end{lem}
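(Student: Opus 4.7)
The idea is to reduce to Lemma \ref{lem:Pn} via the Joukowski substitution $y = x + 1/x$. Every monic palindromic integer polynomial $P(x)$ of degree $2n$ factors uniquely as $P(x) = x^n Q(x + 1/x)$ for some monic integer polynomial $Q(y)$ of degree $n$, and the roots of $P$ pair up as $\{\alpha, 1/\alpha\}$ with $\beta = \alpha + 1/\alpha$ running over the roots of $Q$. The Joukowski map carries the annulus $\{z : 1/\lambda \leq |z| \leq \lambda\}$ onto the filled ellipse $E_\lambda$ with semi-axes $\mu := \lambda + 1/\lambda$ and $\sqrt{\mu^2 - 4} = \lambda - 1/\lambda$. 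Hence $\lambda > 1$ is a bi-Perron unit with palindromic characteristic polynomial of degree $2n$ precisely when the associated $Q$ has $\mu$ as a real root and all of its roots contained in $E_\lambda$.

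For the upper bound $|\mathcal{B}_n(R)| \lesssim R^{n(n+1)/2}$, palindromy forces $a_k = a_{2n-k}$, so such a polynomial is determined by $a_1, \ldots, a_n$. Vieta's formulas combined with the root bound in the annulus $[1/R, R]$ give $|a_k| \lesssim R^{\min(k, 2n-k)}$, and the total count is $\lesssim \prod_{k=1}^n R^k = R^{n(n+1)/2}$.

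For the lower bound, I would replay the Rouch\'e-based counting from the proof of Lemma \ref{lem:Pn} for $Q$ with $R$ replaced by $R' = R + 1/R$. The analogues of inequalities (\ref{1})--(\ref{3}) single out $\sim R'^{n(n+1)/2} \sim R^{n(n+1)/2}$ monic integer polynomials $Q$ of degree $n$ having a real root $\mu \in [R'/2, R']$ and all other roots $\beta$ satisfying $|\beta| < R'/3$. The main new ingredient is the geometric check that such $\beta$ actually lie in $E_\lambda$; this follows from the elementary inequality $R'/3 < \sqrt{\mu^2 - 4}$, valid for all sufficiently large $R$ since $\sqrt{\mu^2 - 4} \sim R$. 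Consequently each such $Q$ produces via $P(x) = x^n Q(x + 1/x)$ a bi-Perron unit $\lambda = (\mu + \sqrt{\mu^2 - 4})/2 \leq R$ whose characteristic polynomial is exactly $P$. An analogue of Lemma \ref{lem:reduciblePn} discards reducible $Q$'s, and irreducible $Q$'s yield distinct $\lambda$'s since each such $Q$ is then the minimal polynomial of the corresponding $\mu$. Combined with the upper bound, this gives $|\mathcal{B}_n(R)| \sim R^{n(n+1)/2}$.
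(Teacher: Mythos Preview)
Your argument is correct and follows essentially the same route as the paper: both use the Joukowski substitution $y=x+1/x$ to pass between bi-Perron units and Perron numbers, and both obtain the lower bound by recycling the Rouch\'e construction from Lemma~\ref{lem:Pn}. The only minor difference is in the upper bound: the paper simply observes that $x\mapsto x+1/x$ injects $\mathcal{B}_n(R)$ into $\mathcal{P}_n(R+1)$ and invokes Lemma~\ref{lem:Pn}, whereas you count palindromic coefficients directly via $|a_k|\lesssim R^{\min(k,2n-k)}$; and you make explicit the ellipse-containment check $R'/3<\sqrt{\mu^2-4}$ that the paper leaves implicit.
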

\begin{proof}
	When $x>1$, $1/x<1$. Hence, $x\in \mathcal{B}_n(R)$ implies that $x+1/x\in \mathcal{P}_n(R+1)$, hence $|\mathcal{B}_n(R)|\lesssim (R+1)^{n(n+1)/2}\sim R^{n(n+1)/2}$.
	
	On the other hand, from the proof of Lemma \ref{lem:Pn} and the discussion above, the number of Perron numbers of degree $n$ which lie between $R$ and $R/2$ and have all conjugates smaller than $R/3$ is $\sim R^{n(n+1)/2}$. When $R$ is sufficiently large, each such Perron number $y$ correspond to a bi-Perron number $x$ by the relation $x+1/x=y$. And in fact $x$ is an algebraic unit (so it is in $\mathcal{B}_n(R)$). Note that both roots of the polynomial $x^2 - yx + 1$ are algebraic integers, since it is a monic polynomial with algebraic integer coefficients. Furthermore, the product of these roots is 1, so they must be algebraic units. 
Hence $\mathcal{B}_n(R)\gtrsim R^{n(n+1)/2}$.
\end{proof}

Now we are ready to prove our main theorem. 
\begin{proof}[Proof of Theorem \ref{thm:main}] 
By Corollary \ref{cor:EMRH} and Lemma \ref{lem:Bn}, we get 
$$ {|\mathcal{D}_m(R) \cap \mathcal{B}_n(R)|\over|\mathcal{B}_n(R)|} \lesssim \frac{R^{4m-3}}{\log R \cdot R^{n(n+1)/2}}.$$ The right-hand side goes to $0$ as $R$ goes to $+\infty$ as long as we have $4m-3 \leq n(n+1)/2$. When $m = n$, this inequality is satisfied if and only if $n \geq 6$ (recall that $n$ is always assumed to be at least $2$), which proved part (1). Part (2) follows from the same argument above but using Corollary \ref{cor:em} instead of Corollary \ref{cor:EMRH}. 
\end{proof} 

Recall that $\Gamma_n$ is the set of all closed geodesics in the moduli space of area one abelian differentials on the surface $S_n$, 
and $\Gamma_n(R)$ is the subset of $\Gamma_n$ which consists of the closed geodesics of length no larger than $\log R$. For each 
$\gamma \in \Gamma_n$, let $m_\gamma$ be the number $| \{ \gamma' \in \Gamma_n : \ell(\gamma') = \ell(\gamma) \}|$. 

We remark that if $m_\gamma$ were uniformly bounded, one could have obtained Theorem \ref{thm:main} (1) for $n \geq 2$ instead of $n \geq 6$ using 
Theorem 1 (1) of \cite{Hamen16}. But at least in the low genus cases, this is not true. As an application of our argument, we obtain the following theorem. 

\begin{thm} \label{thm:multiplicity} 
Suppose $n \leq 5$. 
For any positive integer $k$, the set $\{ \gamma \in \Gamma_n : m_\gamma \geq k \}$ is typical, i.e., 
$$\lim_{R\to \infty} \dfrac{| \{ \gamma \in \Gamma_n(R) : m_\gamma \geq k \} |}{| \Gamma_n(R) |} \to 1.$$

\end{thm}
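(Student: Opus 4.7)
The plan is to leverage the gap between two growth rates already established in the paper: Lemma \ref{lem:Bn} together with $\mathcal{D}_n(R) \subseteq \mathcal{B}_n(R)$ gives $|\mathcal{D}_n(R)| \lesssim R^{n(n+1)/2}$, while Theorem \ref{thm:EMRH} gives $|\Gamma_n(R)| \sim R^{4n-3}/\log R$. For $2 \leq n \leq 5$ the former is of strictly smaller order than the latter, so most closed geodesics in $\Gamma_n(R)$ must share their length with many others, which is exactly what the multiplicity statement asserts.

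The first step is the combinatorial observation that the surjection $\gamma \mapsto e^{\ell(\gamma)}$ restricts to a surjection $\Gamma_n(R) \twoheadrightarrow \mathcal{D}_n(R)$ whose fiber over $e^{\ell(\gamma)}$ is exactly $\{\gamma' \in \Gamma_n(R) : \ell(\gamma') = \ell(\gamma)\}$, a set of cardinality $m_\gamma$. Hence $\{\gamma \in \Gamma_n(R) : m_\gamma < k\}$ is the union of those fibers of size at most $k-1$, so
$$|\{\gamma \in \Gamma_n(R) : m_\gamma < k\}| \leq (k-1)\,|\mathcal{D}_n(R)|.$$

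The second step is to plug in the two asymptotics to obtain
$$\frac{|\{\gamma \in \Gamma_n(R) : m_\gamma < k\}|}{|\Gamma_n(R)|} \lesssim (k-1)\, R^{n(n+1)/2 \,-\, (4n-3)} \log R,$$
and to note that the exponent equals $(n-1)(n-6)/2$, which is strictly negative (in fact at most $-2$) precisely for $n \in \{2,3,4,5\}$. For those $n$ the right-hand side tends to zero, so the complementary set $\{\gamma \in \Gamma_n(R) : m_\gamma \geq k\}$ has asymptotic density $1$, as required.

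There is no substantial obstacle here: the argument is a short consequence of fiber counting combined with the upper bound $|\mathcal{D}_n(R)| \leq |\mathcal{B}_n(R)|$ and the asymptotic of Theorem \ref{thm:EMRH}. The only minor verification is the identification of $m_\gamma$ with the fiber size, which is immediate from the definition since $e^{\ell(\gamma_1)} = e^{\ell(\gamma_2)}$ iff $\ell(\gamma_1) = \ell(\gamma_2)$. In spirit, the theorem is the dual of Theorem \ref{thm:main}(1): the same numerical inequality between $n(n+1)/2$ and $4n-3$ governs both statements, but it is used in opposite directions, which is why the regime of interest here ($n \leq 5$) complements the regime $n \geq 6$ of Theorem \ref{thm:main}(1).
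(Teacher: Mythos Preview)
Your proof is correct and follows essentially the same approach as the paper: both hinge on the fiber-counting inequality $|\{\gamma \in \Gamma_n(R) : m_\gamma < k\}| \leq (k-1)\,|\mathcal{D}_n(R)|$ (the paper writes the equivalent $|\mathcal{D}_n(R)| \geq \tfrac{1}{k}|\{\gamma : m_\gamma < k\}|$) together with $\mathcal{D}_n(R)\subset\mathcal{B}_n(R)$, Lemma~\ref{lem:Bn}, and Theorem~\ref{thm:EMRH}. Your direct presentation and explicit factoring of the exponent as $(n-1)(n-6)/2$ is a bit cleaner than the paper's argument by contradiction, but the substance is the same.
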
 
\begin{proof}
Suppose not. Then for some $k$, we have $$\limsup \dfrac{| \{ \gamma \in \Gamma_n(R) : m_\gamma < k \} |}{| \Gamma_n(R) |} > 0.$$

But this implies that 
$$ \limsup \dfrac{ |\mathcal{D}_n(R)| }{ |\Gamma_n(R) |} \geq \limsup \dfrac{\frac{1}{k} | \{ \gamma \in \Gamma_n(R) : m_\gamma < k \} |}{| \Gamma_n(R) |} > 0.$$

On the other hand, we know that $\mathcal{D}_n(R) \subset \mathcal{B}_n(R)$. As a consequence, 
$ \lim \dfrac{ |\mathcal{B}_n(R)| }{ |\Gamma_n(R)|} \geq  \limsup \dfrac{ |\mathcal{D}_n(R)| }{ |\Gamma_n(R) |}$. 
By Corollary \ref{cor:EMRH} and Lemma \ref{lem:Bn}, we get 
$$ \dfrac{ |\mathcal{B}_n(R)| }{ |\Gamma_n(R)|} \sim \dfrac{ R^{n(n+1)/2} \log R }{ R^{4n-3} } \to 0, \mbox{ for } n \leq 5, $$ 
a contradiction.

\end{proof}

\section{Acknowledgements}

\noindent We would like to thank Bal\'azs Strenner for many helpful comments. The first author was  supported by the ERC Grant Nb. 10160104. \\[10pt]

\end{document}